\pgfplotsset{compat=1.18}
\definecolor{MyDarkBlue}{cmyk}{0.8,0.3,0.8,0.4}
\definecolor{yellow}{rgb}{0.99,0.99,0.70}
\definecolor{white}{rgb}{1.0,1.0,1.0}
\definecolor{black}{rgb}{0.00,0.00,0.00}
\renewcommand{\eprint}[1]{{\it Available at}\href{https://arxiv.org/abs/#1}{\it{ arXiv:#1}}.}
\renewcommand{\PrintDOI}[1]{\url{https://doi.org/#1}}%{#1}}
\renewcommand{\MR}[1]{\href{https://mathscinet.ams.org/mathscinet-getitem?mr=#1}{\color{cyan}{MR#1}}}
\numberwithin{equation}{section}
\newcommand{\be}{\begin{eqnarray}}
\newcommand{\ee}{\end{eqnarray}}
\newcommand{\ce}{\begin{eqnarray*}}
\newcommand{\de}{\end{eqnarray*}}
\newtheorem{theorem}{Theorem}[section]
\newtheorem{lemma}[theorem]{Lemma}
\newtheorem{remark}[theorem]{Remark}
\newtheorem{definition}[theorem]{Definition}
\newtheorem{proposition}[theorem]{Proposition}
\newtheorem{corollary}[theorem]{Corollary}
\def\eps{\varepsilon}
\def\e{\mathrm{e}}
\def\p{\partial}
\def\[{{\Big[}}
\def\]{{\Big]}}
\def\<{{\langle}}
\def\>{{\rangle}}
\def\({{\big(}}
\def\){{\big)}}
\def\sgn{\mbox{\rm sgn}}
\def\dif{{\mathord{{\rm d}}}}
\def\min{{\mathord{{\rm min}}}}
\def\bb2{{\boldsymbol{2}}}
\def\={&\!\!=\!\!&}
\def\bB{{\mathbf B}}
\def\bC{{\mathbf C}}
\def\cR{{\mathcal R}}
\def\mB{{\mathbb B}}
\def\mE{{\mathbb E}}
\def\mN{{\mathbb N}}
\def\mP{{\mathbb P}}
\def\mR{{\mathbb R}}
\def\b1{{\mathbbm 1}}
\def\sB{{\mathscr B}}
\def\sF{{\mathscr F}}
\def\sS{{\mathscr S}}
\def\geq{\geqslant}
\def\leq{\leqslant}
\def\ge{\geqslant}
\def\le{\leqslant}
\def\div{\mathord{{\rm div}}}
\def\eps{\varepsilon}
\def\e{\mathrm{e}}
\def\p{\partial}
\def\[{{\Big[}}
\def\]{{\Big]}}
\def\<{{\langle}}
\def\>{{\rangle}}
\def\sgn{\mbox{\rm sgn}}
\def\dif{{\mathord{{\rm d}}}}
\def\min{{\mathord{{\rm min}}}}
\def\={&\!\!=\!\!&}
\def\bt{\begin{theorem}}
\def\et{\end{theorem}}
\def\bl{\begin{lemma}}
\def\el{\end{lemma}}
\def\br{\begin{remark}}
\def\er{\end{remark}}
\def\bd{\begin{definition}}
\def\ed{\end{definition}}
\def\bp{\begin{proposition}}
\def\ep{\end{proposition}}
\def\bc{\begin{corollary}}
\def\ec{\end{corollary}}
\def\geq{\geqslant}
\def\leq{\leqslant}
\def\ge{\geqslant}
\def\le{\leqslant}
\def\div{\mathord{{\rm div}}}
 \def\R{\mathbb R}
 \def\R{\mathbb R}
\def\<{\langle} \def\>{\rangle}
\begin{document}

\title
[Unified Condition of weak and strong well-posedness for MVSDEs Driven by $\alpha$-Stable Processes]{Strong and Weak Well-Posedness of McKean-Vlasov SDEs driven by $\alpha$-stable processes Under Unified Condition}
\author{Zimo Hao}

\thanks{\it Keywords: $\alpha$-stable process; McKean-Vlasov SDEs; regularization by noise.  }

\address{
Zimo Hao: Fakult\"at f\"ur Mathematik, Universit\"at Bielefeld, 33615, Bielefeld, Germany,
Email: zhao@math.uni-bielefeld.de}

\thanks{Zimo Hao is grateful for the DFG through the CRC 1283/2 2021 - 317210226
``Taming uncertainty and profiting from randomness and low regularity in analysis, stochastics and their applications''.  }

\begin{abstract}
In this paper, we consider $\alpha \in (0,2)$ and establish the strong well-posedness of McKean--Vlasov SDEs driven by an $\alpha$-stable process with a H\"older (Besov) kernel $K \in \bC^\beta$, where $\beta > 1-\alpha$. This condition coincides with the well-known threshold for the weak well-posedness.
\end{abstract}

\maketitle

\setcounter{tocdepth}{1}
\tableofcontents

\section{Introduction}
Consider the following SDE driven by an $\alpha$-stable process:
\begin{align}\label{SDE}
\dif X_t = b(X_t)\dif t + \dif L_t,
\end{align}
where $b: \mathbb{R}^d \to \mathbb{R}^d$ with $b \in \bC^\beta$, where $\bC^\beta$ denotes the Besov space $\bB^\beta_{\infty,\infty}$ (see Definition \ref{iBesov} below). The process $(L_t)_{t \ge 0}$ is a rotationally invariant $\alpha$-stable L\'evy process with $\alpha \in (0,2)$, whose infinitesimal generator is the fractional Laplacian $\Delta^{\frac{\alpha}{2}} := -(-\Delta)^{\frac{\alpha}{2}}$ (see \eqref{fL} below), defined on a complete filtered probability space $(\Omega, \sF, \mathbb{P}, (\sF_t)_{t \ge 0})$.
 It is well known (see, e.g., \cite{CZZ21}) that:
\begin{itemize}
\item[(i)] If $\beta > 1 - \alpha$, there exists a unique weak solution.
\item[(ii)] If $\beta > 1 - \frac{\alpha}{2}$, there exists a unique strong solution.
\end{itemize}

\begin{remark}
Let $\alpha \in (1,2)$. In case (i), \cite{CZZ21} considers only the case $\beta > 0$. For $\beta \in \left(\frac{1-\alpha}{2}, 0\right)$, the existence and uniqueness of weak and martingale solutions have been established in \cite{LZ22, CM22}, while the existence and uniqueness of strong solutions in the one-dimensional case have been shown in \cite{ABM20}. For $\beta \in \left(\frac{3(1-\alpha)}{2}, \frac{1-\alpha}{2}\right]$ and enhanced drifts $b$, a unique generalized martingale solution was shown in \cite{KP22} via paracontrolled calculus. Furthermore, for $\beta \in (1-\alpha, \frac{1-\alpha}{2}]$ and $\div b \in \bC^\beta$, a unique weak solution was obtained in \cite{HW23}.
\end{remark}

The condition $\beta > 1 - \alpha$ arises naturally from scaling arguments (see \eqref{scaling} below), whereas the stronger assumption $\beta > 1 - \frac{\alpha}{2}$ is primarily technical conditions required for PDE estimates in Zvonkin-type transformations (cf. \cite{Pr12, CSZ18, Zh21, CZZ21, WH23}). This discrepancy between the H\"older conditions for weak and strong solutions motivates the central question of this paper:
\begin{align}\label{Question}
\text{\it Can one establish the strong well-posedness under the assumption $\beta > 1 - \alpha$?} \tag{Q}
\end{align}

When $\alpha = 2$, this question has been resolved for the McKean–Vlasov SDE (MVSDE):
\begin{align*}
\dif X_t = (K * \mu_{X_t})(t,X_t), \dif t + \dif W_t,
\end{align*}
where $K: \mathbb{R}_+ \times \mR^d \to \mR^d$ is a measurable function, $\mu_{X_t}$ is the time marginal law of the solution, and $W$ is a standard $d$-dimensional Brownian motion. Specifically, \cite{CJM25} established pathwise uniqueness and strong well-posedness for $K \in L^\infty(\mathbb{R}_+; \bC^{\beta})$ with $\beta > -1$.

However, for $\alpha < 2$ and the MVSDE:
\begin{align}\label{DDSDE}
\dif X_t = (K * \mu_{X_t})(t,X_t), \dif t + \dif L_t,
\end{align}
\cite{CJM25} only addresses the case $\alpha > 1$ and proves the strong well-posedness under the condition
\begin{align*}
\beta > 2 - \tfrac{3}{2}\alpha,
\end{align*}
which is strictly stronger than $\beta > 1 - \alpha$.

In this paper, we aim to answer question \eqref{Question} for the MVSDE \eqref{DDSDE}. Throughout, we assume $\alpha \in (0,2)$, $T > 0$, and:
\begin{align}\label{con}
K \in L^\infty_T\bC^\beta := L^\infty([0,T]; \bC^\beta), \quad \text{for some } \beta \in \mR. \tag{\bf H$_\beta$}
\end{align}
Under the assumption $\beta > 1 - \alpha$, it is well-known that the MVSDE \eqref{DDSDE} admits a unique global weak solution (see \cite{CJM25} for the case $\alpha > 1$ and \cite{HRW24} for $\alpha \leq 1$).

\vspace{1mm}

Here is our main result.

\begin{theorem}\label{in:main}
Let $\alpha \in (0,2)$ and assume condition \eqref{con} holds with some $\beta > 1 - \alpha$. Then \eqref{DDSDE} admits a unique global strong solution for any initial date $X_0\in\sF_0$.
\end{theorem}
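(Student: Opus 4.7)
The plan is to reduce strong well-posedness of \eqref{DDSDE} to strong well-posedness of an auxiliary classical SDE with a frozen but regularized drift; the regularization stems from the observation that the marginal density of the (unique) weak solution smooths the singular kernel $K$ through convolution.

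First, since weak well-posedness of \eqref{DDSDE} under \eqref{con} with $\beta>1-\alpha$ is already available, the flow of time-marginals $(\mu_t)_{t\in[0,T]}$ of any weak solution is uniquely determined. I would then freeze the measure by setting $b_t(x):=(K\ast\mu_t)(x)$ and consider the classical SDE
$$\dif X_t = b_t(X_t)\,\dif t + \dif L_t,\qquad X_0\in\sF_0.$$
By the extension of the Yamada--Watanabe principle to the McKean--Vlasov setting, strong well-posedness of \eqref{DDSDE} then follows from weak existence together with pathwise uniqueness of the above frozen SDE: any strong solution of \eqref{DDSDE} necessarily has the same marginals $\mu_t$ by weak uniqueness, and hence solves the frozen SDE with the same driving noise and initial condition.

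The core of the proof is to show that $b_t$ is substantially more regular than $K$ itself. The marginal density $\rho_t$ satisfies, in the distributional sense, the nonlinear fractional Fokker--Planck equation
$$\partial_t\rho_t=\Delta^{\alpha/2}\rho_t-\div\bigl((K\ast\rho_t)\,\rho_t\bigr),$$
whose mild form reads $\rho_t = P^\alpha_t\rho_0 - \int_0^t P^\alpha_{t-s}\,\div((K\ast\rho_s)\rho_s)\,\dif s$, with $P^\alpha_t = e^{t\Delta^{\alpha/2}}$ obeying the classical smoothing $\|P^\alpha_t f\|_{\bB^\sigma_{1,\infty}}\lesssim t^{-\sigma/\alpha}\|f\|_{L^1}$ for $\sigma\ge 0$. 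Starting from $\rho_0\in\mathcal{P}(\mR^d)$ and bootstrapping on this mild formulation (with paraproduct/commutator estimates to handle the nonlinear term $(K\ast\rho)\rho$), I expect to obtain $\rho_t\in\bB^s_{1,\infty}$ for every $s<\alpha$. Combined with the Besov convolution inequality
$$\|K\ast\rho\|_{\bC^{\beta+s}}\lesssim \|K\|_{\bC^\beta}\,\|\rho\|_{\bB^s_{1,\infty}}$$
(an easy consequence of Littlewood--Paley decomposition and Young's inequality), this promotes $b_t$ into $L^\infty_T\bC^\gamma$ with $\gamma = \beta + s > 1-\alpha/2$, which is possible precisely because $\beta>1-\alpha$ allows the choice $s\in(1-\alpha/2-\beta,\alpha)$.

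Once $b_t\in L^\infty_T\bC^\gamma$ with $\gamma>1-\alpha/2$, the classical strong well-posedness theory for $\alpha$-stable driven SDEs with H\"older drifts above the Zvonkin threshold (cf.\ \cite{Pr12,CSZ18,CZZ21}) yields pathwise uniqueness and existence of a unique strong solution to the frozen SDE, which is also a strong solution of \eqref{DDSDE}; uniqueness of the latter then follows again from weak uniqueness. The main obstacle is the bootstrap in the second step: the density $\rho_t$ is a priori only an $L^1$ object, the nonlinearity $(K\ast\rho)\rho$ couples $\rho$ to itself through a convolution with a kernel of possibly negative regularity, and the estimates must close in an $L^1$-type Besov scale at the critical scaling $\beta>1-\alpha$ without losing any margin. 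Should closing the fixed point directly prove too delicate, a natural fallback is to regularize $K$ to smooth $K^n$, solve the corresponding smooth MVSDEs to obtain strong solutions $X^n$, derive uniform-in-$n$ Besov estimates on the densities $\rho^n_t$, and pass to the limit via compactness combined with the Yamada--Watanabe reduction described above.
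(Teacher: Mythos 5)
Your overall strategy matches the paper's: freeze the drift $b_t = K\ast\mu_t$ via weak well-posedness, reduce to pathwise uniqueness of the linearized SDE by Yamada--Watanabe, and upgrade the regularity of $b_t$ through a Besov estimate on the marginal density $\rho_t$. But two steps have genuine gaps.

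First, your claim that $b_t \in L^\infty_T\bC^\gamma$ with $\gamma > 1-\alpha/2$ cannot be right for general initial data $X_0 \in \sF_0$ (in particular a Dirac mass). Your own semigroup bound $\|P^\alpha_t f\|_{\bB^\sigma_{1,\infty}} \lesssim t^{-\sigma/\alpha}\|f\|_{L^1}$ already tells you that the best one can hope for is $\|\rho_t\|_{\bB^\delta_{1,\infty}} \lesssim t^{-\delta/\alpha}$, which blows up as $t\to 0$. With the borderline choice $\delta$ slightly below $\alpha/2$ (forced when $\beta$ is close to $1-\alpha$), this places $b_t$ only in $L^2_T\bC^\gamma$, not $L^\infty_T\bC^\gamma$. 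Consequently the references you invoke (\cite{Pr12,CSZ18,CZZ21}) do not directly apply: you need a strong well-posedness theorem for drifts in $L^q_T\bC^\gamma$ with $q<\infty$ under a scaling-subcritical condition. The paper uses exactly such a result from \cite{TW25} (stated there as Theorem~\ref{thm:31}, with the condition $\alpha/q - \gamma < \alpha - 1$, satisfied by $q=2$).

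Second, and more seriously, the key estimate $\|\rho_t\|_{\bB^\delta_{1,\infty}}\lesssim t^{-\delta/\alpha}$ for $\delta$ up to $\alpha$ is the heart of the matter, and you have not shown how to close it. You propose to bootstrap the mild formulation of the nonlinear Fokker--Planck equation directly in the scale $\bB^s_{1,\infty}$; you yourself flag this as the main obstacle. For $\alpha>1$ this does work (it is the content of \cite{HRZ23}, which the paper simply cites), but for $\alpha\leq 1$ the nonlinearity $\div((K\ast\rho)\rho)$ dominates the dissipation $\Delta^{\alpha/2}\rho$ and the energy method underlying such a bootstrap fails at $p=1$ (it only applies for $p\geq 2$, cf.\ \cite[Theorem 3.3]{CZZ21}). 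The paper's genuinely new idea for $\alpha\leq 1$ sidesteps the $L^1$ estimate entirely: one dualizes, writing $\langle\rho_t,\varphi\rangle=\mE\,u^t(t,X_0)$ where $u^t$ solves the backward Kolmogorov equation with drift $B=K\ast\rho$, and then bounds $\|u^t(t)\|_\infty$ via a frequency-localized maximum principle (Lemma~\ref{LMP}) and a commutator estimate, running an induction on the admissible $\delta$ until it reaches $\alpha$. Your proposed fallback of mollifying $K$ does not resolve this, because the uniform-in-$n$ estimate you would need is precisely the one that is hard; the difficulty is in closing the estimate, not in the approximation.
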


\begin{remark}
This result fully answers question \eqref{Question} for the MVSDE \eqref{DDSDE} with $\alpha\in(0,2)$.
\end{remark}

\br
Let us now return to the original question \eqref{Question} for the SDE \eqref{SDE}. This case is significantly more challenging than the McKean–Vlasov setting, as there is no mechanism to transfer regularity from $\mu_{X_t}$ to the drift term $K * \mu_{X_t}$ (see \eqref{02} below). In fact, even in the Brownian motion case ($\alpha = 2$), question \eqref{Question} is only solved for $\beta \ge 0$ (with $\beta = 0$ interpreted as $\bC^0 = L^\infty$). When $\beta < 0$, strong well-posedness is only known in one dimension, and only for $\beta > -\tfrac{1}{2}$ (see \cite{BC02}). For $\beta > -1$, only weak well-posedness is known (see \cite{HZ23, GP24}).
\er

%\bt\label{in:main}
%Let $\alpha\in(\frac12,2)$. When $\alpha<1$, we assume that $(\div K)_{-}\in L^1(\mR_+;L^\infty(\mR^d))$. Under the condition \eqref{con}, there is a unique global strong solution to \eqref{DDSDE} with any initial data $X_0\in\sF_0$.
%\et

\vspace{2mm}

\leftline{\it\bf Scaling analysis:} 

\vspace{2mm}

Let $X$ be a solution to the SDE
$$
\dif X_t=B(t,X_t)\dif t+\dif L_t,
$$
where $B\in L^q(\mR_+;\bC^\beta)$ for some $q\in[1,\infty]$ and $\beta\in\mR$.
For any $\lambda > 0$, define the rescaled processes and drift by  $$
X_t^\lambda := \lambda^{-1} X_{\lambda^\alpha t},  \ 
L_t^\lambda := \lambda^{-1} L^{(\alpha)}_{\lambda^\alpha t},  \
B_\lambda(x) := \lambda^{\alpha - 1} B(\lambda^{\alpha} t,\lambda x).
$$  
Formally, the rescaled process $X^\lambda$ satisfies the equation
$$
\mathrm{d} X_t^\lambda = B_\lambda(t,X_t^\lambda) \, \mathrm{d}t + \mathrm{d}L_t^\lambda.
$$  
Moreover, the scaling behavior of the drift in the $L^q(\mR_+;\bC^{-\beta})$ norm satisfies  
$$
\|B_\lambda\|_{L^q(\mR_+;\bC^{\beta})} \approx \lambda^{\alpha - 1 -\frac{\alpha}{q}+ \beta} \|B\|_{L^q(\mR_+;\bC^{\beta})}.
$$ 
As $\lambda \to 0$, this leads to a classification of three scaling regimes based on the behavior of the norm: 
\begin{align}\label{scaling}
    \text{\it Scaling Subcritical: } \tfrac{\alpha}{q}+\beta < \alpha - 1;  
    \end{align}
and
$$
\text{\it Scaling Critical: } \tfrac{\alpha}{q}+\beta = \alpha - 1; \quad  
\text{\it Scaling Supercritical: } \tfrac{\alpha}{q}+\beta > \alpha - 1.
$$
In the subcritical regime, we have $ \lim_{\lambda\to0}\|B_\lambda\|_{L^q(\mR_+;\bC^{\beta})}=0$, implying that the rescaled drift becomes negligible, and thus the solution behaves like the driving noise: $X\approx L$. 

Notably, when $B(t,x)=B(x)$, there exists a well-known counterexample due to \cite{TTW74}, which shows that in one dimension, for any $\beta\in(0,1-\alpha)$ (i.e., in the supercritical regime), there exists a drift $b \in \bC^{-\beta}$, for which both pathwise uniqueness and uniqueness in law fail for SDE \eqref{SDE}.
\vspace{2mm}

\vspace{2mm}

\leftline{\it\bf Sketch of the proof of Theorems \ref{in:main}:}
\vspace{2mm}
 For two solutions $X^1$ and $X^2$ to \eqref{DDSDE} with the same $\alpha$-stable process $L$ and initial data $X^1_0=X^2_0$, $\mP$-a.s. By the well-known weak well-posedness, we have $\mu_{X_t^1}=\mu_{X_t^2}$. Hence we only need to construct a unique strong solution to the SDE:
 \begin{align}\label{SDE2}
     \dif X_t=B(t,X_t)\dif t+\dif L_t,
 \end{align}
where $B(t,x):=(K*\mu_{X}^1)(t,x)=(K*\mu_{X}^2)(t,x)$.
 
Based on the Yamada-Watanabe theorem, to show the strong well-posedness, we only need to show the pathwise uniqueness of the SDE \eqref{SDE2}. We note that
$$
\|B\|_{L^\infty_T \bC^\beta}\lesssim \|K\|_{L^\infty_T \bC^\beta}
$$
with the same parameter $\beta>1-\alpha$, with which we cannot show the pathwise uniqueness directly for the SDE \eqref{SDE2}. However, the time marginal law $\mu_t:=\mu_{X^1_t}$ satisfies the following nonlinear Fokker-Planck equation (FPE):
\begin{align}\label{in:FPE}
    \p_t \mu_t=\Delta^{\frac\alpha2}\mu_t-\div ((K*\mu_t) \mu_t).
\end{align}
% of the process $L_t$. 
It is well-known that under condition \eqref{con}, $\mu_t$ admits a density $\rho_t$ with respect to the Lebesgue measure (see \cite{CJM25, HRW24}). By the scaling, it is expected that we can have for any $\eps>0$,
\begin{align}\label{in:FPEe}
    \|\rho_t\|_{\bB^{\frac{\alpha}{2}-\eps}_{1,\infty}}\lesssim t^{-\frac{1}{2}+\frac{\eps}{\alpha}} \in L^2[0,T],
\end{align}
for any $T>0$. Then the convolution inequality (see \eqref{ineq:con} below) yields that
\begin{align}\label{02}
\|B\|_{L^2_T\bC^{\beta+\frac\alpha2-\eps}}=\|K*\rho\|_{L^2([0,T];\bC^{\beta+\frac\alpha2-\eps})}\lesssim \left(\int_0^T\|\rho_t\|_{\bB^{\frac{\alpha}{2}-\eps}_{1,\infty}}^2\dif t\right)^{\frac12}\|K\|_{L^\infty_T \bC^\beta}<\infty.
\end{align}
For sufficiently small $\varepsilon>0$, we have $\gamma:=\beta+\frac\alpha2-\eps>1-\frac\alpha2$ under the assumption $\beta>1-\alpha$.
The condition $\gamma>1-\tfrac{\alpha}{2}$ allows us to apply the classical approach based on Zvonkin’s transformation (see \cite{CZZ21, SX23, HRW24}). The key step is to derive suitable estimates for the solution $u$ of the backward Kolmogorov equation
\begin{align*}
    \p_t u+\Delta^{\frac{\alpha}{2}}u+B\cdot\nabla u+B=0,\quad u(T)=0,
\end{align*}
in particular to control
$\|u\|_{L^2_T\bC^{1+\frac{\alpha}{2}}}$. This quantity can be bounded by $\|u\|_{L^2_T \bC^{\alpha+\gamma}}$, which follows from the Schauder theory.

Moreover, the assumption $B \in L^2_T\bC^{\gamma}$ remains in the scaling-subcritical regime \eqref{scaling}, since
$$
\tfrac{\alpha}{2}-\gamma=\tfrac{\alpha}{2}-(1-\tfrac\alpha2+\eps)=\alpha-1-\eps<\alpha-1.
$$
Thus, it is natural that pathwise uniqueness for \eqref{SDE2} can be obtained under such a condition on $B$. In fact, a rigorous proof of pathwise uniqueness for $B\in L^2_T\bC^{\gamma}$ with $\gamma>1-\tfrac{\alpha}{2}$ has already been given in \cite{TW25}. Therefore, the main task is to prove \eqref{in:FPEe}.

\vspace{1mm}

When $\alpha>1$, the estimates in \eqref{in:FPEe} were obtained in \cite{HRZ23}, which directly yields Theorem \ref{in:main}.

For the case $\alpha<1$, which is called the supercritical case, it becomes a bit difficult since in this case the diffusion part $\Delta^{\frac\alpha2}\mu_t$ is weaker than the conservation term $\div((K*\mu_t)\mu_t)$. In the paper, we would like to use the energy method in \cite{CZZ21, SX23} which was used to estimates the solution to corresponding PDE with the supercritical case $\alpha\in(0,1)$. However, we need the $L^1$-estimates for the density with block operator $\cR_j\rho_t$, which can not be obtained with energy method, which is available for $L^p$-estimates with $p\ge2$ (see \cite[Theorem 3.3]{CZZ21}). To this end, we consider the following non-divergence form PDE with any fixed $t\ge 0$:
 \begin{align*}
     \p_s u^t(s)=\Delta^{\frac{\alpha}{2}}u^t(s)+(K(t-s)*\rho_{t-s})\cdot \nabla u^t(s),\quad u^t(0)=\varphi\in \bB^{-\delta}_{\infty,\infty}
 \end{align*}
 with some $\delta>0$.
 Applying It\^o's formula to $s\to u^t(t-s,X_s)$, one sees that
 \begin{align*}%\label{in:00}
     \langle\rho_t,\varphi\rangle=\mE \varphi(X_t)=\mE u^t(t,X_0),
 \end{align*}
 which implies that
 \begin{align*}%\label{in:01}
    \|\rho_t\|_{\bB^{\delta}_{1,1}}\lesssim\sup_{\|\varphi\|_{\bB^{-\delta}_{\infty,\infty}}=1} |\langle\rho_t,\varphi\rangle|\le \sup_{\|\varphi\|_{\bB^{-\delta}_{\infty,\infty}}=1} \|u^t(t)\|_\infty.
 \end{align*}
 Therefore, we only need to show the following estimate, which bypassed the $L^1$-estimates:
 \begin{align*}%\label{01}
 \|u^t(t)\|_\infty\lesssim t^{-\frac{\delta}{\alpha}}\|\varphi\|_{\bB^{-\delta}_{\infty,\infty}}.    
 \end{align*}

\vspace{2mm}

\leftline{\it\bf Structure of the paper:}
\vspace{2mm}
The paper is organized as follows. In Section \ref{Sec:2}, we introduce the definitions and basic concepts of Besov spaces, as well as the $\alpha$-stable process. In Section \ref{Sec:3}, we introduce a recent strong well-posedness result for SDE \eqref{SDE2}. In Section \ref{Sec:4}, we establish the regularity estimate for the solution to nonlinear FPE \eqref{in:FPE}, which implies the estimate \eqref{in:FPEe}. Finally, we show the proof of our main result, Theorem \ref{in:main} in Section \ref{Sec:5}.

\vspace{2mm}

\leftline{\it\bf Conventions and notations:}
\vspace{2mm}

Throughout this paper, we use the following conventions and notations: As usual, we use $:=$ as a way of definition. Define $\mN_0:= \mN \cup \{0\}$ and $\mR_+:=[0,\infty)$. %The letter $c=c(\cdots)$ denotes an unimportant constant, whose value may change in different places. 
We use $A \asymp B$ and $A\lesssim B$ %{ (or $A\lesssim_c B$)} 
to denote $c^{-1} B \leq A \leq c B$ and $A \leq cB$, respectively, for some unimportant constant $c \geq 1$. We also use $A   \lesssim_\Theta B$ to denote $A \leq c(\Theta) B$ when we want to emphasize that the implicit constant $c$ depends on parameters $\Theta$. %Denote by Beta functions and Gamma functions, respectively, 

For every $p\in [1,\infty)$, we denote by $L^p$ the space of all $p$-order integrable functions on $\mR^d$ with the norm denoted by $\|\cdot\|_p$. For $p=\infty$, we set
\begin{align*}
    \|f\|_\infty:=\sup_{x\in\mR^d}|f(x)|.
\end{align*}
We use $C_b^\infty$ to denote the space of infinitely differentiable bounded functions, and $C^\infty_0$ to denote the space of infinitely differentiable functions with compact support. We also let $C_0$ denote the space of continuous functions vanishing at infinity.

For a Banach space $\mB$ and $T>0$, $q\in[1,\infty]$, we denote by
$$
L_T^q\mB:= L^q([0,T];\mB).%,\ \  \mL^q_T:=L^q([0,T]\times \mR^d).
$$
 
%\begin{itemize}
%\item  We denote the space of all bounded and continuous function on $\mR^d$ by $C_b:=C_b(\mR^d)$. Denote the space of all bounded smooth function with compact support by $C_0^\infty:=C_0^\infty(\mR^d)$. We also define the space consisting of all function $f\in C_b$ with $\lim_{x\to\infty}|f(x)|=0$ by $C_0(\mR^d)$.
 
%\item We use the convention $\frac{1}{\infty}=0$.
%\end{itemize}

%\subsection{Scaling analysis}

%\subsection{Structure}
%In Section \ref{}, we introduce the Besov space and some properties. In Section \ref{}, we investigate the pathwise uniqueness to SDE \eqref{SDE} with the condition $B\in L^2([0,T];\bC^{1-\frac{\alpha}{2}+})$. In Section \ref{}, we investigate \eqref{FPE}. Finally, in Section \ref{}, we give the proof to Theorem \ref{in:main}.

\section{Preliminaries}\label{Sec:2}
\subsection{Besov spaces}
In this section, we introduce the definition and some basic properties of Besov spaces. Let $\sS(\mR^d)$ be the Schwartz space of all rapidly decreasing functions on $\mR^d$, and $\sS'(\mR^d)$ be
the dual space of $\sS(\mR^d)$ called Schwartz generalized function (or tempered distribution) space. Given $f\in\sS(\mR^d)$, 
the Fourier transform $\hat f$ and the inverse Fourier transform  $\check f$ are defined by
$$
\hat f(\xi) :=(2 \pi)^{-d/2}\int_{\mR^d} \e^{-i\xi\cdot x}f(x)\dif x, \quad\xi\in\mR^d,
$$
$$
\check f(x) :=(2 \pi)^{-d/2}\int_{\mR^d} \e^{i\xi\cdot x}f(\xi)\dif\xi, \quad x\in\mR^d.
$$
For every $f\in\sS'(\mR^d)$, the Fourier and the inverse Fourier transform are defined in the following way respectively,
\begin{align*}
\<\hat{f},\varphi\>:=\<f,\hat{\varphi}\>,\ \ \<\check{f},\varphi\>:=\<f,\check{\varphi}\>, \ \  \forall\varphi\in\sS(\mR^d).
\end{align*}
Let $\chi:\mR^{d}\to[0,1]$ be a radial smooth function with
\begin{align*}
\chi(\xi)=
\begin{cases}
1, & \ \  |\xi|\leq 1,\\
0, &\ \ |\xi|>3/2.
\end{cases}
\end{align*}
For $\xi \in \mR^d$, define $\psi(\xi):=\chi(\xi)-\chi(2\xi)$ and
\begin{align*}
\psi_j (\xi) := \begin{cases}
\chi(2\xi), &\ \ \text{for}\ \  j =-1,\\
\psi(2^{-j} \xi),& \ \ \text{for}\ \ j \in \mN_0.
\end{cases}
\end{align*}
Denote $B_r := \{\xi\in \mR^d ;  |\xi|\leq r\}$ for $r>0$. It is easy to see that  supp$\psi\subset B_{3/2}\backslash B_{1/2}$, and
\begin{align}\label{eq:SA00}
\sum_{j=-1}^{k}\psi_j(\xi)=\chi(2^{-k}\xi)\to 1,\ \ \hbox{as}\ \ k\to\infty.
\end{align}
For $j\ge -1$, the block operator $\cR_j$ is defined on $\sS'(\mR^d)$ by
\begin{align*} 
\cR_j f (x):=(\psi_j\hat{f})^{\check\,}(x)=\check\psi_j* f (x).
\end{align*}
\iffalse
We also define
$$
\tilde{\cR}_j:=\cR_{j-1}+\cR_{j}+\cR_{j+1},\quad j\ge-1,
$$
with $\cR_{-2}:=0$. It follows from the definition that
\begin{align}\label{0801:00}
    \tilde{\cR}_j\cR_j=\cR_j.
\end{align}
By the symmetry of $\psi_j$, we have for any $f\in\sS'(\mR^d)$ and $g \in \sS(\mR^d)$,
\begin{align}\label{0801:01}
\int_{\mR^d}\cR_jf(x)g(x)\dif x=\int_{\mR^d}f(x)\cR_jg(x)\dif x, \quad  j\ge-1. 
\end{align}
The cut-off low frequency operator $S_j$ is defined by 
\begin{align*}
S_j f:= \sum_{k= -1}^{j-1} \cR_k f.
\end{align*} 
%which, by \eqref{eq:SA00}, derives that
%\begin{align*}
%f = \lim_{j \to \infty}S_j f= \sum_{k\geq -1} \cR_k f,
%\end{align*} 
\fi

Now we state the definition of Besov spaces.
\bd[Besov spaces]\label{iBesov}
For every $s\in\mR$ and $p,q\in[1,\infty]$, the Besov space $\bB_{p,q}^s(\mR^d)$ is defined by
$$
\bB_{p,q}^s(\mR^d):=\left\{f\in\sS'(\mR^d)\, \big| \, \|f\|_{\bB^s_{p,q}}:= \left[ \sum_{j \geq -1}\left( 2^{s j} \|\cR_j f\|_{p} \right)^q \right]^{1/q}  <\infty\right\}.
$$
If $p=q=\infty$, we denote
$$
\bC^s:=\bB_{\infty,\infty}^s(\mR^d).
$$
\ed 
\br
It is well known that when $s>0$ and $s\notin \mN$, 
\begin{align*}
    \|f\|_{\bC^s}\asymp\sum_{i=0}^{[s]}\|\nabla^if\|_\infty+\sup_{x\ne y}\frac{|\nabla^{[s]}f(x)-\nabla^{[s]}f(y)|}{|x-y|^{s-[s]}},
\end{align*}
which is the classical H\"older norm, where $[s]$ is the integer part of $s$. We refer to \cite[Remark 3.4]{WH23} for more details. 
\er

Recall the following Bernstein's inequality (cf. \cite[Lemma 2.1]{BCD11} and \cite[Lemma 2.2]{CZZ21}).

\bl[Bernstein's inequality]
For each $k\in\mN_0$, there is a constant $c=c(d,k)>0$ such that for all $j\ge-1$ and $1\leq p_1\leq p_2 \leq \infty$,
\begin{align}\label{Bern}
\|\nabla^k\cR_j f\|_{p_2}  \lesssim_{c}  2^{(k+ d (\frac{1}{p_1}-\frac{1}{p_2}))j}\|\cR_j f\|_{p_1},
\end{align}
and for some $c=c(d,\alpha,p_1)>0$
\begin{align}\label{Bern2}
\|\Delta^{\frac{\alpha}{2}}\cR_j f\|_{p_2}  \lesssim_{c}  2^{(\alpha+d(\frac{1}{p_1}-\frac{1}{p_2})) j}\|\cR_j f\|_{p_1}.
\end{align}
\el

Apart from the classical Bernstein inequality, there are also more general versions for
$$
\|\Delta^{\frac\alpha2}(|\cR_j f|^{\frac{p}{2}})\|_{2},\quad  \text{and}\quad \int_{\mR^d} |\cR_j f|^{p-2}\cR_j f \Delta^{\frac\alpha2}\cR_j f, \quad p\in[2,\infty),
$$  as established in \cite{CMZ07} and \cite[Lemma 3.2]{LZ22}. Here, we introduce a frequency-localized maximum principle corresponding to the case $p=\infty$ (see \cite[Lemma 4.7]{SX23} and \cite[Lemma 3.4]{WZ11}).
\begin{lemma}\label{LMP}
There is a constant $c_0>0$ such that for any $j\geq0$ and $u\in L^1$, $\cR_j u\in C_0$, and
\begin{equation*}
\inf_{x\in J(\cR_ju)}\left\{\sgn(\cR_j u(x))\cdot\left(-\Delta^{\frac\alpha2} \cR_j u(x)\right)\right\}
\geq c_02^{\alpha j}\|\cR_ju\|_{\infty},
\end{equation*}
where $J(\cR_j u):=\{x:|\cR_j u(x)|=\|\cR_j u\|_{\infty}\}$.
\end{lemma}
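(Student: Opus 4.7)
\textit{Strategy.} My plan follows the standard Littlewood--Paley rescaling combined with a Constantin--Vicol style reproducing-kernel argument, which exploits the annular Fourier support of $\cR_j u$. Setting $f(y):=\cR_j u(2^{-j}y)$, one has $\hat f$ supported in the fixed annulus $\{\tfrac12\le|\xi|\le\tfrac32\}$, $\|f\|_\infty=\|\cR_j u\|_\infty$, $J(f)=2^j J(\cR_j u)$, and a direct Fourier computation gives $(-\Delta^{\alpha/2}\cR_j u)(x)=2^{\alpha j}(-\Delta^{\alpha/2}f)(2^j x)$. The lemma thus reduces to the unit-scale assertion: for any such $f$ and any $y^*\in J(f)$,
\begin{align*}
\sgn(f(y^*))\cdot(-\Delta^{\alpha/2}f)(y^*)\ge c_0\|f\|_\infty,
\end{align*}
with a universal $c_0>0$. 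Replacing $f$ by $-f$ if necessary, I assume $M:=f(y^*)=\|f\|_\infty>0$.

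\textit{Mean-zero reproducing kernel.} Choose a Schwartz $\phi$ whose Fourier transform is a $C_c^\infty(\R^d)$ cutoff, suitably normalized so that it is constant on $\{\tfrac12\le|\xi|\le\tfrac32\}$ and vanishes near $\xi=0$. Then $f=\phi*f$, and crucially $\int_{\R^d}\phi=0$ by Fourier inversion at the origin. Consequently,
\begin{align*}
M=f(y^*)=\int\phi(z)f(y^*-z)\,\dif z=-\int\phi(z)\bigl(M-f(y^*-z)\bigr)\dif z,
\end{align*}
and since $M\ge f(y^*-z)$ everywhere, $M\le\int|\phi(z)|(M-f(y^*-z))\,\dif z$. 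I split at $|z|=R$ for a large universal $R=R(d,\alpha)$: the tail $|z|>R$ contributes at most $2M\int_{|z|>R}|\phi|\le M/2$ by the Schwartz decay of $\phi$; on $|z|\le R$ I use $|\phi|\le\|\phi\|_\infty$ together with $|z|^{-d-\alpha}\ge R^{-d-\alpha}$ to obtain
\begin{align*}
\tfrac M2\le\|\phi\|_\infty R^{d+\alpha}\int_{|z|\le R}\frac{M-f(y^*-z)}{|z|^{d+\alpha}}\,\dif z.
\end{align*}

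\textit{Comparison with $-\Delta^{\alpha/2}$ and the main obstacle.} Finally I compare with the pointwise singular-integral representation
\begin{align*}
(-\Delta^{\alpha/2}f)(y^*)=c_{d,\alpha}\int_{\R^d}\frac{M-f(y^*+z)}{|z|^{d+\alpha}}\,\dif z,
\end{align*}
which is absolutely convergent because $f$ is smooth with $\nabla f(y^*)=0$, so the integrand is $O(|z|^{2-d-\alpha})$ near $0$. By $z\leftrightarrow -z$ symmetry this integral dominates $c_{d,\alpha}\int_{|z|\le R}\frac{M-f(y^*-z)}{|z|^{d+\alpha}}\,\dif z$, and combining with the preceding bound yields $M\le C(d,\alpha)(-\Delta^{\alpha/2}f)(y^*)$. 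Rescaling through the first step produces the factor $2^{\alpha j}$. Since the argument is pointwise and applies at every $y^*\in J(f)$, the infimum in the statement is bounded below by $c_0 2^{\alpha j}\|\cR_j u\|_\infty$. The conceptual heart of the proof is the identity $\int\phi=0$, which is available precisely because $j\ge 0$ (the annular support is separated from the origin); this is the step that would break down for $j=-1$ and is what allows the frequency-localized maximum principle to quantitatively beat the ordinary one. No deep obstacle remains beyond verifying that the splitting radius $R$ and all constants are universal, which is immediate since $\phi$ depends only on the chosen dyadic partition.
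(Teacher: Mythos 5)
Your proof is correct, and it is essentially the argument underlying the references the paper cites: the paper's own proof does not spell out the estimate but simply verifies $\cR_j u\in C_0$ (via Riemann--Lebesgue, since $\widehat{\cR_j u}=\psi_j\hat u\in L^1$ when $u\in L^1$) and then invokes \cite[Lemma 3.4]{WZ11} (see also \cite[Lemma 4.7]{SX23}), whose proof is precisely your rescaling-plus-mean-zero-reproducing-kernel argument. The one thing you omit is the claim $\cR_j u\in C_0$, which is part of the lemma's conclusion and is also what guarantees $J(\cR_j u)\neq\emptyset$ so that your $y^*$ exists; you should add the one-line Riemann--Lebesgue justification of this before running the pointwise argument.
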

\begin{proof}
    Since $u\in L^1$, $\cR_j u$ is a bounded continuous function. Its Fourier transform is supported in $\{(1/2)2^j<|\xi|<(3/2)2^j\}$ and the Riemann-Lebesgue lemma implies $\lim_{x\to \infty} |\cR_j u(x)|=0$. Thus, the proof follows directly from \cite[Lemma 3.4]{WZ11} (see also \cite[Lemma 4.7]{SX23}).
\end{proof}

The following convolution inequality is well-known (see \cite[(2.13)]{HRZ23} for example).

\bl
For any $\beta,\beta_1,\beta_2\in\mR$ and $p,p_1,p_2\in[1,\infty]$ with 
\begin{align*}
    1+\tfrac1p=\tfrac1{p_1}+\tfrac1{p_2},
\end{align*}
it holds that
\begin{align}\label{ineq:con}
    \|f*g\|_{\bB^{\beta}_{p,\infty}}\le 5\|f\|_{\bB^{\beta_1}_{p_1,\infty}}\|g\|_{\bB^{\beta_2}_{p_2,\infty}}.
\end{align}
\el

We also introduce the following commutator estimate, which is proven in \cite[Lemma 2.3]{CZZ21}. Here for two operators $T_1$ and $T_2$, we use $[T_1,T_2]:=T_1T_2-T_2T_1$ to denote the commutator.
\begin{lemma}
Let $p,p_1,p_2,q_1,q_2\in[1,\infty]$ satisfying $\frac{1}{p}=\frac{1}{p_1}+\frac{1}{p_2}$
and $\frac{1}{q_1}+\frac{1}{q_2}=1$. Then for any $s_1\in(0,1)$ and $s_2\in[-s_1,0]$
there exists a constant $C=C(d,p,p_1,p_2,s_1,s_2)>0$ such that
\begin{equation}\label{ComEs}
\|[\cR_j,f]g\|_p\leq C2^{-j(s_1+s_2)}
\begin{cases}
\|f\|_{B_{p_1,\infty}^{s_1}}\|g\|_{p_2}, & \kappa_2=0,\\
\|f\|_{B_{p_1,\infty}^{s_1}}\|g\|_{B_{p_2,\infty}^{s_2}}, & s_1+s_2>0,\\
\|f\|_{B_{p_1,q_1}^{s_1}}\|g\|_{B_{p_2,q_2}^{s_2}}, & s_1+s_2=0.
\end{cases}
\end{equation}
%where $[\cR_j,f]g:=\cR_j(fg)-f\cR_jg$.
\end{lemma}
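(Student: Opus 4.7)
The plan is to prove the three cases by a combination of a direct integral representation (for $s_2=0$) and Bony's paraproduct decomposition (for $s_2<0$), the latter being the standard strategy for commutator estimates between Littlewood--Paley blocks and multipliers.

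\textbf{Case $s_2=0$ (direct approach).} Since $\cR_j h=\check\psi_j*h$, one can write
\[
[\cR_j,f]g(x)=\int_{\mR^d}\check\psi_j(z)\bigl(f(x-z)-f(x)\bigr)g(x-z)\dif z.
\]
By Minkowski's inequality and H\"older's inequality with $\tfrac1p=\tfrac1{p_1}+\tfrac1{p_2}$,
\[
\|[\cR_j,f]g\|_p\le \int_{\mR^d}|\check\psi_j(z)|\,\|f(\cdot-z)-f(\cdot)\|_{p_1}\|g\|_{p_2}\dif z.
\]
The characterization $\|f(\cdot-z)-f(\cdot)\|_{p_1}\lesssim |z|^{s_1}\|f\|_{\bB^{s_1}_{p_1,\infty}}$ (valid for $s_1\in(0,1)$) together with the scaling $\check\psi_j(z)=2^{jd}\check\psi(2^jz)$ yields
\[
\int|\check\psi_j(z)||z|^{s_1}\dif z=2^{-js_1}\int|\check\psi(w)||w|^{s_1}\dif w\lesssim 2^{-js_1},
\]
which is the desired bound in Case~1.

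\textbf{Cases $s_2<0$ and $s_1+s_2\ge 0$ (paraproducts).} Here $g$ is only a distribution, so I would invoke Bony's decomposition $fg=T_fg+T_gf+R(f,g)$ with $T_fg=\sum_k S_{k-1}f\cdot\cR_k g$ and $R(f,g)=\sum_{|k-k'|\le 1}\cR_kf\cdot\cR_{k'}g$, and split
\[
[\cR_j,f]g=\underbrace{\cR_j(T_fg)-T_f\cR_jg}_{\text{(I)}}+\underbrace{\cR_j(T_gf)-T_{\cR_jg}f}_{\text{(II)}}+\underbrace{\cR_j R(f,g)-R(f,\cR_jg)}_{\text{(III)}}.
\]
For (I) I would use the spectral localization $\cR_j(S_{k-1}f\cdot\cR_kg)=0$ for $|k-j|>N$: only $O(1)$ indices contribute, and for each such $k$ one writes the commutator $[\cR_j,S_{k-1}f]\cR_kg$ again as an integral against $\check\psi_j$, bounding the finite difference of $S_{k-1}f$ by its Lipschitz seminorm, which by Bernstein costs $2^{k(1-s_1)}\|f\|_{\bB^{s_1}_{p_1,\infty}}$; combined with the weight $2^{-j}$ from $\int|z||\check\psi_j(z)|\dif z$ this gives $2^{-j(s_1+s_2)}\|f\|_{\bB^{s_1}_{p_1,\infty}}\|\cR_kg\|_{p_2}2^{-ks_2}$, which in either Besov norm for $g$ is acceptable since $k\sim j$. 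Piece (II) is similar: only $k\sim j$ contribute by spectral support, and $T_{\cR_jg}f$ is handled via $\|\cR_jg\|_{\infty}$-type bounds reduced through Bernstein \eqref{Bern} to $\bB^{s_2}_{p_2,q_2}$ norms of $g$. Piece (III) is the resonant term, estimated directly by summing over $k\ge j-N$:
\[
\|\cR_j R(f,g)\|_p\lesssim\sum_{k\ge j-N}\|\cR_kf\|_{p_1}\|\tilde\cR_kg\|_{p_2}.
\]

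\textbf{Sub-case $s_1+s_2>0$.} The geometric series in $k\ge j-N$ of $\|\cR_kf\|_{p_1}\|\tilde\cR_kg\|_{p_2}\lesssim 2^{-k(s_1+s_2)}\|f\|_{\bB^{s_1}_{p_1,\infty}}\|g\|_{\bB^{s_2}_{p_2,\infty}}$ converges and produces $2^{-j(s_1+s_2)}$.

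\textbf{Sub-case $s_1+s_2=0$.} The geometric argument fails, but the exponents $q_1,q_2$ are dual by hypothesis; applying H\"older in the index $k$ gives
\[
\sum_{k\ge j-N}\|\cR_kf\|_{p_1}\|\tilde\cR_kg\|_{p_2}\le\Bigl(\sum_k(2^{ks_1}\|\cR_kf\|_{p_1})^{q_1}\Bigr)^{1/q_1}\!\Bigl(\sum_k(2^{ks_2}\|\tilde\cR_kg\|_{p_2})^{q_2}\Bigr)^{1/q_2},
\]
which is exactly $\|f\|_{\bB^{s_1}_{p_1,q_1}}\|g\|_{\bB^{s_2}_{p_2,q_2}}$, and in this regime $2^{-j(s_1+s_2)}=1$.

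\textbf{Main obstacle.} The delicate part is the endpoint $s_1+s_2=0$: the resonance sum no longer converges by a pure geometric argument, and the duality $\tfrac1{q_1}+\tfrac1{q_2}=1$ is forced precisely to recover summability via H\"older in the dyadic index. All other terms (pieces (I) and (II), and the paraproducts) behave well because spectral localization confines the summation to $O(1)$ indices near $j$.
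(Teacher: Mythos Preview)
The paper does not prove this lemma; it simply cites \cite[Lemma 2.3]{CZZ21}. Your sketch is correct and is essentially the argument given there: the $s_2=0$ case via the kernel representation of $\cR_j$ together with the first-difference characterization of $\bB^{s_1}_{p_1,\infty}$, and the $s_2<0$ cases via Bony's paraproduct decomposition, with the endpoint $s_1+s_2=0$ handled by H\"older in the dyadic index using the duality $\tfrac{1}{q_1}+\tfrac{1}{q_2}=1$ on the resonant sum.
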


\subsection{$\alpha$-stable processes}

Fix $\alpha\in(0,2)$.
Let $(L_t)_{t\geq0}$ be a rotationally invariant and symmetric $\alpha$-stable process
whose generator is the fractional Laplacian
\begin{equation}\label{fL}
\Delta^{\frac{\alpha}{2}}\phi(x)=\int_{\R^d}\left(\phi(x+z)-\phi(x)-z^{(\alpha)}\cdot\nabla\phi(x)\right)\nu(\dif z),
\quad \phi\in\sS(\R^d),
\end{equation}
where $z^{(\alpha)}:=z\b1_{\{\alpha\in[1,2)\}}\b1_{\{|z|\leq1\}}$ and the L\'evy measure
$$
\nu(\dif z)=\frac{c_{d,\alpha}}{|z|^{d+\alpha}}\dif z
$$
with some specific constant $c_{d,\alpha}>0$.
%Then we have the following L\'evy-It\^o decomposition:
%\begin{equation*}
%L_t^{(\alpha)}:=\int_0^t\int_{|z|\leq1}z\widetilde{N}(\dif s,\dif z)+\int_0^t\int_{|z|>1}z N(\dif s,\dif z),
%\end{equation*}
%Note that for any $0\leq\kappa_1<\alpha<\kappa_2$,
%\begin{align*}
%\int_{|z|\leq1}|z|^{\kappa_2}\nu(\dif z)+\int_{|z|>1}|z|^{\kappa_1}\nu(\dif z)<\infty.
%\end{align*}
Let $N(\dif s,\dif z)$ be the associated Poisson random measure defined by
\begin{equation*}
N((0,t]\times A):=\sum_{s\in[0,t]}\b1_A\left(L_s-L_{s-}\right)
\end{equation*}
for all $A\in\sB(\R^d\setminus\{0\})$ and $t>0$. Then it follows from L\'evy--It\^o's
decomposition that
\begin{equation*}
L_t=\int_0^t\int_{|z|\leq1}z\widetilde{N}(\dif s,\dif z)
+\int_0^t\int_{|z|>1}zN(\dif s,\dif z),
\end{equation*}
where $\widetilde{N}(\dif s,\dif z):=N(\dif s,\dif z)-\nu(\dif z)\dif s$ is
the compensated Poisson random measure.

\section{Pathwise uniqueness for SDEs}\label{Sec:3}
In this section, we consider the following SDE:
\begin{align}\label{SDE0}
    \dif X_t=B(t,X_t)\dif t+\dif L_t,
\end{align}
where $B\in L^q_T\bC^\gamma$ with some $\gamma>1-\frac\alpha2$ and $q>1$. 

The following result is from \cite[Theorem 1.1-(iii)]{TW25}.
\bt\label{thm:31}
Assume $\alpha\in(0,2)$. Let $T>0$ and $B\in L^q_T\bC^\gamma$ with some $\gamma>1-\frac\alpha2$ and
\begin{align}\label{00}
    \tfrac{\alpha}{q}- \gamma<\alpha-1.
\end{align}
Then for any initial data $X_0\in \sF_0$, there is a unique strong solution to \eqref{SDE0} on $[0,T]$. Moreover, the pathwise uniqueness holds. 
\et
\br
For any $\gamma>1-\frac{\alpha}{2}$, $q=2$ always satisfies \eqref{00}.
\er

\iffalse
To show the theorem, we need to introduce the following PDE for constructing the Zvonkin transformation.
\begin{align}\label{PDE}
    \p_tu+\Delta^{\frac\alpha2}u-\lambda u+b\cdot \nabla u+b,\quad u(T)=0.
\end{align}
The following estimates for the solution to \eqref{PDE} was investigated many times before for the whole case $\alpha\in(0,2)$ (see \cite{CZZ21, Zh23, SX23, HW24} for instance). For the readers' convenience, we provide a detailed proof. 
\bl
Assume $\alpha\in(0,2)$. Let $T>0$ and $b\in L^2([0,T];\bC^\gamma)$ with some $\gamma>1-\frac\alpha2$. Then for any $\theta\in[0,\alpha]$, there is a constant $C=C(d,T,\alpha,\gamma,\|b\|_{L^2([0,T];\bC^\gamma)})>0$ such that for all $\lambda\ge0$,
\begin{align}
    \|u\|_{L^2([0,T];\bC^{\gamma+\theta})}\le C (1+\lambda)^{-\frac{\alpha-\theta}{\alpha}}.
\end{align}
Moreover, there is a constant $c_\lambda>0$ satisfying $\lim_{\lambda\to \infty}c_\lambda=0$ such that
\begin{align}
    \|\nabla u\|_{L^\infty([0,T]\times \mR^d)}\le c_\lambda.
\end{align}
\el

\begin{proof}
    We divide the proof in two cases: $\alpha> 1$ and $\alpha\le 1$.

    When $\alpha>1$, 
\end{proof}

\begin{proof}[Proof of Theorem \ref{thm:31}]
    
\end{proof}
\fi
\section{Regularity of the time marginal law}\label{Sec:4}
In this section, we consider the nonlinear FPE \eqref{in:FPE}. Under assumption \eqref{con} with $\beta > 1 - \alpha$, it is well known (see, e.g., \cite{HRZ23, HRW24}) that for any $t \ge 0$, the measure $\mu_t$ admits a density with respect to the Lebesgue measure, denoted by $\rho_t$. The density $\rho_t$ satisfies the nonlinear FPE:
\begin{align}\label{FPEr}
\partial_t \rho_t = \Delta^{\frac{\alpha}{2}} \rho_t - \operatorname{div} \big( (K * \rho_t) \rho_t \big).
\end{align}

Our goal in this section is to establish the following regularity estimate for $\rho_t$ under various assumptions and for different ranges of $\delta \ge 0$:
\begin{align}\label{rhot}
        \|\rho_t\|_{\bB^{\delta}_{1,\infty}}\le Ct^{-\frac{\delta}{\alpha}}.
    \end{align}
We now state the main result in this section:
\bt\label{thm41}
Let $\alpha\in(0,2)$ and $T>0$. Assume that condition \eqref{con} holds with $\beta>1-\alpha$. Then:

\begin{itemize}
    \item[(i)] 
    If $\alpha \in (1,2)$, then for any $\delta \ge 0$, there exists a constant $C > 0$ such that estimate \eqref{rhot} holds for all $t \in (0,T]$.
    \item[(ii)] 
   If $\alpha \in (0,1]$, then for any $\delta \in [0, \alpha )$, there exists a constant $C > 0$ such that \eqref{rhot} holds for all $t \in (0,T]$.
\end{itemize}
\et
\begin{proof} Since \eqref{rhot} is trivial when $\delta = 0$, we consider only the case $\delta > 0$ in what follows.

\vspace{1mm}

    (i) If $\alpha \in (1,2)$, then the result follows directly from \cite[Theorem 1.9, (B)-(i)]{HRZ23}.

\vspace{1mm}
    
(ii)  Let $\alpha\in(0,1]$ and $\beta>1-\alpha\ge0$. For any $t\in(0,T]$ and $\varphi\in C^\infty_0$, we consider the following PDE:
    \begin{align}\label{KPDE}
        \p_s u^t(s) =\Delta^{\frac\alpha2}u^t(s)+ B(t-s)\cdot\nabla u^t(s),\quad u^t(0)=\varphi,
    \end{align}
    where $B(t):=K(t)*\rho_t$.
    Taking $X$ be the unique weak solution to \eqref{DDSDE}, applying It\^o's formula to the process $s\to u(t-s,X_s)$, we have
\begin{align*}
    \dif u^t(t-s,X_s)=&\left(-\p_s u^t(t-s)-\Delta^{\frac\alpha2}u^t(t-s)+ B(s)\cdot\nabla u^t(t-s) \right)(X_s)\dif s\\
    &+\int_{\mR^d}u(t-s,X_{s-}+z)-u(t-s,X_{s-})\tilde{N}(\dif s,\dif z)\\
    =&\int_{\mR^d}u(t-s,X_{s-}+z)-u(t-s,X_{s-})\tilde{N}(\dif s,\dif z),
\end{align*}
    which implies that
    \begin{align}\label{0815:00}
       \langle \varphi,\rho_t\rangle=\mE \varphi(X_t)= \mE u^t(0,X_t)=\mE u^t(t,X_0).
    \end{align}
     Then, based on \cite[Proposition 2.76]{BCD11}, 
     we have
     \begin{align}\label{in:00}
         \|\rho_t\|_{\bB^{\delta}_{1,\infty}}\le \|\rho_t\|_{\bB^{\delta}_{1,1}}\lesssim \sup_{\varphi\in C^\infty_0}\frac{|\langle \varphi,\rho_t\rangle|}{\|\varphi\|_{\bC^{-\delta}}}\le \sup_{\varphi\in C^\infty_0}\frac{\|u^t(t)\|_\infty}{\|\varphi\|_{\bC^{-\delta}}}.
     \end{align}
     Thus, to show \eqref{rhot}, we only need to give 
     \begin{align}\label{01}
         \|u^t(t)\|_\infty\lesssim t^{-\frac{\delta}{\alpha}}\|\varphi\|_{\bC^{-\delta}},
     \end{align}
     for any $\delta\in(0,\alpha)$.   To establish \eqref{01}, by a standard argument, we may assume $K\in L^\infty_T C^\infty_b$ and show that \eqref{01} holds with a constant depending only on $c_K:=\|K\|_{L^\infty_T\bC^\beta}$.

    %Based on the technique in \cite{SX23}, we have the following estimate (see \cite[Step 1.2 on page 15]{HRW24} and \cite[(4.10)]{CHZ24}): 

Applying $\cR_j$ to act both sides of \eqref{KPDE}, we have   \begin{align*}
    \p_s \cR_j u^t &=\Delta^{\frac{\alpha}{2}}\cR_j u^t +\cR_j(B(t-s)\cdot\nabla u^t)\\
    &=\Delta^{\frac{\alpha}{2}}\cR_j u^t +B(t-s)\cdot\nabla\cR_j u^t+[\cR_j,B(t-s)]\cdot\nabla u^t.
\end{align*} 
Since $B\in L^\infty_T\bC^\infty_b$ and $\varphi\in L^1$, it is well-known that $u\in L^\infty_T L^1$ (see \cite[Lemma C.1]{HRZ23}). Based on Lemma \ref{LMP}, $\cR_j u^t(s)\in C_0$, and we let $x_{s,j}$ be the point such that $\cR_j u^t(s)$ reaches its $L^\infty$-norm. Without loss of generality, we may assume that $\cR_j u^t (s,x_{s,j})>0$. Otherwise, we can consider the equation for $-u^t$. 
Then Lemma \ref{LMP} yields that for $j\ge0$,
\begin{align*}
    \Delta^{\frac{\alpha}{2}}\cR_ju^t(s,x_{s,j})\le -c_0 2^{\alpha j}\|\cR_j u^t(s)\|_\infty.
\end{align*}
Moreover, for $j=-1$, \eqref{Bern2} yields that
\begin{align*}
    \Delta^{\frac{\alpha}{2}}\cR_{-1}u^t(s,x_{s,j})\le \|\Delta^{\frac{\alpha}{2}}\cR_{-1}u^t(s)\|_\infty\le c_1 \|\cR_{-1} u^t(s)\|_\infty
\end{align*}
with some constant $c_1=c_1(d,\alpha)>0$.
Noting that $B(t-s,x_{s,j})\cdot\nabla\cR_j u^t(s,x_{s,j})=0$, we have for $j\ge -1$,
\begin{align*}
    \p_s \cR_j u^t(s,x_{s,j}) \le \left(-c_0 2^{\alpha j}+c_1\right)\|\cR_j u^t(s)\|_\infty +\|[\cR_j,B(t-s)]\cdot\nabla u^t(s)\|_\infty.
\end{align*}
    Thus, in view of \cite[Lemma 3.2]{WZ11}, we have 
    \begin{align}\label{S4:06}
        \p_s\|\cR_j u^t(s)\|_\infty=\p_s \cR_j u^t(s,x_{s,j})\lesssim (1-2^{\alpha j})\|\cR_j u^t(s)\|_\infty+\|[B(t-s),\cR_j]\nabla u^t(s)\|_\infty.
    \end{align}
     By \cite[Lemma 2.3]{CZZ21}, we have for any $\vartheta\in[0,1-\beta)$,
    \begin{align*}
        \|[B(t-s),\cR_j]\nabla u^t(s)\|_\infty\lesssim \|B(t-s)\|_{\bC^{\beta+\vartheta}}\|\nabla u^t(s)\|_{\bB^{-\beta-\vartheta}_{\infty,1}}\lesssim \|B(t-s)\|_{\bC^{\beta+\vartheta}}\|u^t(s)\|_{\bB^{1-\beta-\vartheta}_{\infty,1}},
    \end{align*}
    which by applying Gronwall's inequality to \eqref{S4:06} implies that
    \begin{align}\label{S4:07}
       \|\cR_j u^t(s)\|_\infty\lesssim e^{-c2^{-\alpha j}s}\|\cR_j\varphi\|_\infty+\int_0^s  e^{-c2^{-\alpha j}(s-r)}\|B(t-r)\|_{\bC^{\beta+\vartheta}}\|u^t(r)\|_{\bB^{1-\beta-\vartheta}_{\infty,1}}\dif r.
    \end{align}
    Then, noting that
  \begin{align}\label{S4:11}
e^{-x} \lesssim_\theta x^{-\theta},\quad \text{for $x > 0$,}    \end{align}  
    for any $\delta>0$ and $\theta>1-\beta-\vartheta$, we have
    \begin{align}\label{0813:01}
       2^{j\theta}\|\cR_j u^t(s)\|_\infty\lesssim s^{-\frac{\theta+\delta}{\alpha}}2^{-\delta j}\|\cR_j\varphi\|_\infty+\int_0^s  (s-r)^{-\frac{\theta}{\alpha}}\|B(t-r)\|_{\bC^{\beta+\vartheta}}\|u^t(r)\|_{\bB^{1-\beta-\vartheta}_{\infty,1}}\dif r,
    \end{align}
    which, after taking the supremum over $j$ yields
    \begin{align}\label{0813:00}
       \|u^t(s)\|_{\bB^{1-\beta-\vartheta}_{\infty,1}}\lesssim \|u^t(s)\|_{\bC^{\theta}}\lesssim s^{-\frac{\theta+\delta}{\alpha}}\|\varphi\|_{\bC^{-\delta}}+\int_0^s  (s-r)^{-\frac{\theta}{\alpha}}\|B(t-r)\|_{\bC^{\beta+\vartheta}}\|u^t(r)\|_{\bB^{1-\beta-\vartheta}_{\infty,1}}\dif r.
    \end{align}

Now, we prove \eqref{01} for $\delta\in(0,\alpha)$ by induction:
\begin{itemize}
    \item[{\bf(Step 1)}] We first show that \eqref{01} holds for all $\delta\in(0,\alpha+\beta-1)$;
\item[{\bf(Step 2)}]We then prove that if \eqref{01} holds for some  $\vartheta\in(0,1-\beta)$, it also holds for all $\delta\in(0,\alpha+\beta-1+\vartheta)$;
\item[{\bf(Step 3)}] Finally, we conclude that \eqref{01} holds for all $\delta\in(0,\alpha)$.
\end{itemize}

\vspace{2mm}

{\bf (Step 1):} We let $\vartheta=0$ and note that from \eqref{ineq:con}, it holds that
\begin{align*}
    \|B(t-r)\|_{\bC^{\beta+\vartheta}}\lesssim \|K\|_{L^\infty_T\bC^\beta}\|\rho_{t-r}\|_{\bB^0_{1,\infty}}\lesssim c_K,
\end{align*}
    which by applying Gronwall’s inequality of Volterra type (see \cite{Zh10}) to \eqref{0813:00}, yields that for any $\delta\in (0,\alpha+\beta-1)$, and $\theta\in(1-\beta,\alpha-\delta)$
 \begin{align*}
       \|u^t(s)\|_{\bB^{1-\beta}_{\infty,1}}\lesssim_{c_K} s^{-\frac{\theta+\delta}{\alpha}}\|\varphi\|_{\bB^{-\delta}_{\infty,\infty}}.
    \end{align*}
    Applying it to \eqref{S4:07} with $\vartheta=0$, we have for any $\eps\in(0,\alpha-\theta-\delta)$, 
    \begin{align}\label{S4:0700}
    \begin{split}
              \|\cR_j u^t(t)\|_\infty&\lesssim \|\varphi\|_{\bB^{-\delta}_{\infty,\infty}}\left(e^{-c2^{-\alpha j}t}2^{\delta j}+\int_0^t  e^{-c2^{-\alpha j}(t-r)}r^{-\frac{\theta+\delta}{\alpha}}\dif r\right)\\
       &\lesssim \|\varphi\|_{\bB^{-\delta}_{\infty,\infty}}\left(e^{-c2^{-\alpha j}t}2^{\delta j}+2^{-\eps j}\int_0^t (t-r)^{-\frac{\eps}{\alpha}}r^{-\frac{\theta+\delta}{\alpha}}\dif r\right).
         \end{split}
    \end{align}
    It follows from a change of variable that
    \begin{align*}
        \sum_{j\ge -1} e^{-c2^{-\alpha j}t}2^{\delta j}\lesssim \sum_{j\ge -1}\int_{2^{j}}^{2^{j+1}} e^{-cx^\alpha t/2} x^{\delta-1}\dif x\le t^{-\frac{\delta}{\alpha}}\int_0^\infty e^{-cx^\alpha/2} x^{\delta-1}\dif x,
    \end{align*}
    which implies that for any $\delta\in(0,\alpha+\beta-1)$
    \begin{align*}
        \|u^t(t)\|_{\infty}\lesssim \sum_{j\ge -1} \|\cR_j u^t(t)\|_\infty\lesssim t^{-\frac{\delta}{\alpha}}\|\varphi\|_{\bB^{-\delta}_{\infty,\infty}}.
    \end{align*}
    This establishes \eqref{01} for any $\delta \in (0, \alpha + \beta - 1)$. 

    \vspace{1mm}
    {\bf (Step 2):} We assume that \eqref{01} holds for some $\vartheta\in(0,1-\beta)$. Then based on \eqref{in:00}, we have \eqref{rhot} holds for $\delta=\vartheta$, which by \eqref{ineq:con} gives for any $\vartheta\in(0,\alpha+\beta-1)$,
    \begin{align*}
        \|B(t-r)\|_{\bC^\beta+\vartheta}\lesssim\|K\|_{L^\infty_T\bC^\beta}\|\rho_{t-r}\|_{\bB^\vartheta_{1,\infty}}\lesssim (t-r)^{-\frac{\vartheta}{\alpha}}c_K.
    \end{align*}
Substitute it into \eqref{0813:00}, we have for any $\delta>0$ and $\theta>1-\beta-\vartheta$, 
    \begin{align}\label{0813:01}
       2^{j\theta}\|\cR_j u^t(s)\|_\infty\lesssim_{c_K} s^{-\frac{\theta+\delta}{\alpha}}2^{-\delta j}\|\cR_j\varphi\|_\infty+\int_0^s  (s-r)^{-\frac{\theta}{\alpha}}(t-r)^{-\frac{\vartheta}{\alpha}}\|u^t(r)\|_{\bB^{1-\beta-\vartheta}_{\infty,1}}\dif r,
    \end{align}
which, by taking the supremum over $j$ imposes that
\begin{align*}
     \|u^t(s)\|_{\bB^{1-\beta-\vartheta}_{\infty,1}}\lesssim\|u^t(s)\|_{\bC^\theta}\lesssim s^{-\frac{\theta+\delta}{\alpha}}\|\varphi\|_{\bC^{-\delta}}+\int_0^s  (s-r)^{-\frac{\theta+\vartheta}{\alpha}}(\|u^t(r)\|_{\bB^{1-\beta-\vartheta}_{\infty,1}}\dif r.
\end{align*}
Then, for any $\delta\in(0,\alpha+\beta-1+\vartheta)$ and $\theta\in(1-\beta-\vartheta,\alpha-\delta\vee \vartheta)$, by applying Gronwall’s inequality of Volterra type (see \cite{Zh10}), we have
\begin{align*}
    \|u^t(s)\|_{\bB^{1-\beta-\vartheta}_{\infty,1}}\lesssim s^{-\frac{\theta+\delta}{\alpha}}\|\varphi\|_{\bC^{-\delta}}.
\end{align*}
Then similar as \eqref{S4:0700}, 
 applying it to \eqref{S4:07}, we have 
    \begin{align*}
       \|u^t(t)\|_\infty&\le\sum_{j}\|\cR_j u^t(t)\|_\infty
       \lesssim \|\varphi\|_{\bB^{-\delta}_{\infty,\infty}}\le\sum_{j}\left(e^{-c2^{-\alpha j}t}2^{\delta j}+2^{-\eps}\int_0^t (t-r)^{-\frac{\vartheta+\eps}{\alpha}}r^{-\frac{\theta+\delta}{\alpha}}\dif r\right)\\
       &\lesssim \|\varphi\|_{\bB^{-\delta}_{\infty,\infty}}t^{-\frac{\delta}{\alpha}},
    \end{align*}
where we used the fact $\vartheta+\theta<\alpha$ and 
\begin{align*}
    \int_0^t (t-r)^{-\frac{\vartheta+\eps}{\alpha}}r^{-\frac{\theta+\delta}{\alpha}}\dif r\lesssim t^{-\frac{\vartheta+\theta-\alpha+\eps+\delta}{\alpha}}\lesssim t^{-\frac{\delta}{\alpha}}
\end{align*}
with small $\eps>0$. Then we have \eqref{01} for $\delta\in(0,\alpha+\beta-1+\vartheta)$ and finish the proof of Step 2.

    \vspace{2mm}

    {\bf (Step 3):} We begin by choosing $\delta_0 \in (0,\alpha+\beta-1)$.  
By Step~1, \eqref{01} holds for all $\delta \in (0,\delta_0)$.  
Applying Step~2, we obtain that \eqref{01} holds for all $\delta \in (0,\delta_1)$, where  
$$
\delta_1 := \alpha+\beta-1 + \min\{\delta_0,\, 1-\beta\}.
$$  
Repeating this argument inductively, Step~2 yields that \eqref{01} holds for all  
$$
\delta \in \bigcup_{k=0}^\infty (0,\delta_k),
$$  
where  
$$
\delta_k := \alpha+\beta-1 + \min\{\delta_{k-1},\, 1-\beta\}, \quad k \ge 2.
$$  
Since  
$$
\lim_{k \to \infty} \delta_k = \alpha+\beta-1 + (1-\beta) = \alpha,
$$  
we conclude that \eqref{01} holds for all $\delta \in (0,\alpha)$.  
This completes the proof.
\end{proof}

\section{Proof of Theorem \ref{in:main}}\label{Sec:5}
Since the weak uniqueness holds for $\beta > 1 - \alpha$ (see \cite{CJM25} for $\alpha > 1$ and \cite{HRW24} for $\alpha \leq 1$), by the Yamada–Watanabe theorem, it suffices to prove pathwise uniqueness for the linearized SDE \eqref{SDE0} with 
$$
B(t,x):=(K*\mu_{X_t})(t,x)=(K*\rho_{t})(t,x).
$$
Based on \eqref{ineq:con} and Theorem \ref{thm41}, for any $\eps \in (0, \beta + \alpha - 1)$, by taking $\delta=\frac{\alpha}{2}-\eps$ in \eqref{rhot}, we have
\begin{align}\label{proof:00}
    \|B(t)\|_{\bC^{1-\frac{\alpha}{2}+\frac{\eps}{2}}}\lesssim \|K\|_{L^\infty_T\bC^{1-\alpha+\eps}}\|\rho_t\|_{\bB^{\frac{\alpha-\eps}{2}}_{1,\infty}}\lesssim \|K\|_{L^\infty_T\bC^{\beta}}t^{-\frac{1}{2}+\frac{\eps}{\alpha}},
\end{align}
which implies that
\begin{align*}
    B\in L^2([0,T];\bC^{1-\frac{\alpha}{2}+\frac\eps2}).
\end{align*}
    Then, pathwise uniqueness follows from Theorem \ref{thm:31}, and this completes the proof.

%\begin{appendix}
%\renewcommand{\thetable}{A\arabic{table}}
%\numberwithin{equation}{section}
%\section {L^1 estimates for PDE}
%In this section, we assume $ $

%\end{appendix}

\begin{bibdiv}
\begin{biblist}%[\resetbiblist{999}]

\bib{ABM20}{article}{
    AUTHOR = {Athreya, Siva},
    AUTHOR = {Butkovsky, Oleg},
    AUTHOR = {Mytnik, Leonid},
     TITLE = {Strong existence and uniqueness for stable stochastic
              differential equations with distributional drift},
   JOURNAL = {Ann. Probab.},
  FJOURNAL = {The Annals of Probability},
    VOLUME = {48},
      YEAR = {2020},
    NUMBER = {1},
     PAGES = {178--210},
      ISSN = {0091-1798},
   MRCLASS = {60H10 (60G52 60H50)},
  MRNUMBER = {4079434},
       DOI = {10.1214/19-AOP1358},
       URL = {https://doi.org/10.1214/19-AOP1358},
}

\bib{BCD11}{book}{
   author={Bahouri, Hajer},
   author={Chemin, Jean-Yves},
   author={Danchin, Rapha\"el},
   title={Fourier analysis and nonlinear partial differential equations},
   series={Grundlehren der mathematischen Wissenschaften [Fundamental
   Principles of Mathematical Sciences]},
   volume={343},
   publisher={Springer, Heidelberg},
   date={2011},
   pages={xvi+523},
   isbn={978-3-642-16829-1},
   review={\MR{2768550}},
   doi={10.1007/978-3-642-16830-7},
}

\bib{BC02}{article}{
   author={Bass, Richard F.},
   author={Chen, Zhen-Qing},
   title={Stochastic differential equations for Dirichlet processes},
   journal={Probab. Theory Related Fields},
   volume={121},
   date={2001},
   number={3},
   pages={422--446},
   issn={0178-8051},
   review={\MR{1867429}},
   doi={10.1007/s004400100151},
}

\bib{CJM25}{article}{
   author={Chaudru de Raynal, Paul-\'Eric},
   author={Jabir, Jean-Francois}
   author={Menozzi, St\'ephane},
   title={Multidimensional stable driven McKean–Vlasov SDEs with distributional interaction kernel: a regularization by noise perspective},
   journal={Stochastics and Partial Differential Equations: Analysis and Computations},
   volume={13},
   date={2025},
   number={1},
   %pages={108425, 57},
   %issn={0022-1236},
   %review={\MR{4056997}},
   %doi={10.1016/j.jfa.2019.108425},
}

\bib{CM22}{article}{
   author={Chaudru de Raynal, Paul-\'Eric},
   author={Menozzi, St\'ephane},
   title={On multidimensional stable-driven stochastic differential
   equations with Besov drift},
   journal={Electron. J. Probab.},
   volume={27},
   date={2022},
   pages={Paper No. 163, 52},
   review={\MR{4525442}},
   doi={10.1214/22-ejp864},
}

%\bib{CHZ24}{article}{
        %author={Cheng, Mengyu},
       %author={Hao, Zimo},
      %author={Zhang, Xicheng},
       %title={Averaging principle for SDEs with singular drifts driven by $\alpha$-stable processes},
        %date={2022},
     %journal={Available at arXiv:2302.04392},
     %eprint={2409.12706},
%}

\bib{CMZ07}{article}{
   author={Chen, Qionglei},
   author={Miao, Changxing},
   author={Zhang, Zhifei},
   title={A new Bernstein's inequality and the 2D dissipative
   quasi-geostrophic equation},
   journal={Comm. Math. Phys.},
   volume={271},
   date={2007},
   number={3},
   pages={821--838},
   issn={0010-3616},
   review={\MR{2291797}},
   doi={10.1007/s00220-007-0193-7},
}

\bib{CSZ18}{article}{
   author={Chen, Zhen-Qing},
   author={Song, Renming},
   author={Zhang, Xicheng},
   title={Stochastic flows for L\'{e}vy processes with H\"{o}lder drifts},
   journal={Rev. Mat. Iberoam.},
   volume={34},
   date={2018},
   number={4},
   pages={1755--1788},
   issn={0213-2230},
   review={\MR{3896248}},
   doi={10.4171/rmi/1042},
} 

\bib{CZZ21}{article}{
   author={Chen, Zhen-Qing},
   author={Zhang, Xicheng},
   author={Zhao, Guohuan},
   title={Supercritical SDEs driven by multiplicative stable-like L\'{e}vy
   processes},
   journal={Trans. Amer. Math. Soc.},
   volume={374},
   date={2021},
   number={11},
   pages={7621--7655},
   issn={0002-9947},
   review={\MR{4328678}},
   doi={10.1090/tran/8343},
}

\bib{GP24}{article}{
       author={Gr\"afner, Lukas },
   author={Perkowski, Nicolas},
    title={Weak well-posedness of energy solutions to singular SDEs with supercritical distributional drift},
  %journal={arXiv preprint arXiv:2305.18139},
  %year={2023}
     eprint={2407.09046},
}

\bib{HRW24}{article}{
       author={Hao, Zimo},
       author={Ren, Chongyang},
       author={Wu, Mingyan},
       title={Supercritical McKean-Vlasov SDE driven by cylindrical $\alpha$-stable process},
        %date={2022},
     %journal={Available at arXiv:2302.04392},
     eprint={2410.18611},
}

\bib{HRZ23}{article}{
       author={Hao, Zimo},
       author={R\"ockner,Michael},
       author={Zhang, Xicheng},
       title={Second order fractional mean-field SDEs with singular kernels and measure initial data. To appear in {\it Ann. Probab.}},
        %date={2022},
     %journal={Available at arXiv:2302.04392},
     eprint={2302.04392},
}

\bib{HW23}{article}{
       author={Hao, Zimo},
   author={Wu, Mingyan},
    title={SDE driven by cylindrical $\alpha$-stable process with distributional drift. },
  %journal={arXiv preprint arXiv:2305.18139},
  %year={2023}
     eprint={2305.18139},
}

\bib{HZ23}{article}{
       author={Hao, Zimo},
   author={Zhang, Xicheng},
    title={SDEs with supercritical distributional drifts. To appear in {\it Commun. Math. Phys.}},
  %journal={arXiv preprint arXiv:2305.18139},
  %year={2023}
     eprint={arXiv:2312.11145},
}

\bib{KP22}{article}{
   author={Kremp, Helena},
   author={Perkowski, Nicolas},
   title={Multidimensional SDE with distributional drift and L\'evy noise},
   journal={Bernoulli},
   volume={28},
   date={2022},
   number={3},
   pages={1757--1783},
   issn={1350-7265},
   review={\MR{4411510}},
   doi={10.3150/21-bej1394},
}

\bib{LZ22}{article}{
   author={Ling, Chengcheng},
   author={Zhao, Guohuan},
   title={Nonlocal elliptic equation in H\"older space and the martingale
   problem},
   journal={J. Differential Equations},
   volume={314},
   date={2022},
   pages={653--699},
   issn={0022-0396},
   review={\MR{4369182}},
   doi={10.1016/j.jde.2022.01.025},
}

\bib{Pr12}{article}{
   author={Priola, Enrico},
   title={Pathwise uniqueness for singular SDEs driven by stable processes},
   journal={Osaka J. Math.},
   volume={49},
   date={2012},
   number={2},
   pages={421--447},
   issn={0030-6126},
   review={\MR{2945756}},
}

\bib{SX23}{article}{
   author={Song, Renming},
   author={Xie, Longjie},
   title={Weak and strong well-posedness of critical and supercritical SDEs
   with singular coefficients},
   journal={J. Differential Equations},
   volume={362},
   date={2023},
   pages={266--313},
   issn={0022-0396},
   review={\MR{4561681}},
   doi={10.1016/j.jde.2023.03.007},
}
 
\bib{TTW74}{article}{
   author={Tanaka, Hiroshi},
   author={Tsuchiya, Masaaki},
   author={Watanabe, Shinzo},
   title={Perturbation of drift-type for L\'evy processes},
   journal={J. Math. Kyoto Univ.},
   volume={14},
   date={1974},
   pages={73--92},
   issn={0023-608X},
   review={\MR{0368146}},
   doi={10.1215/kjm/1250523280},
}

\bib{TW25}{article}{
      author={Tian, Rongrong},
      author={Wei, Jinlong},
       title={SDEs with subcritical Lebesgue--H\"older drifts and driven by $\alpha$-stable processes},
        %date={2017},
     %journal={Available at arXiv:1809.05712},
     eprint={2502.03712},
}

\bib{WZ11}{article}{
   author={Wang, Henggeng},
   author={Zhang, Zhifei},
   title={A frequency localized maximum principle applied to the 2D
   quasi-geostrophic equation},
   journal={Comm. Math. Phys.},
   volume={301},
   date={2011},
   number={1},
   pages={105--129},
   issn={0010-3616},
   review={\MR{2753672}},
   doi={10.1007/s00220-010-1144-2},
}

\bib{WH23}{article}{
      author={Wu, Mingyan},
      author={Hao, Zimo},
       title={Well-posedness of density dependent SDE driven by $\alpha$-stable process with H\"{o}lder drifts},
        %date={2017},
    journal={Stochastic Processes and their Applications},
      volume={164},
      number={},
       pages={416\ndash442},
         url={https://doi.org/10.1016/j.spa.2023.07.016},
         doi={10.1016/j.spa.2023.07.016},
         }

\bib{Zh10}{article}{
   author={Zhang, Xicheng},
   title={Stochastic Volterra equations in Banach spaces and stochastic
   partial differential equation},
   journal={J. Funct. Anal.},
   volume={258},
   date={2010},
   number={4},
   pages={1361--1425},
   issn={0022-1236},
   review={\MR{2565842}},
   doi={10.1016/j.jfa.2009.11.006},
}

\bib{Zh21}{article}{
   author={Zhao, Guohuan},
   title={Regularity properties of jump diffusions with irregular
   coefficients},
   journal={J. Math. Anal. Appl.},
   volume={502},
   date={2021},
   number={1},
   pages={Paper No. 125220, 29},
   issn={0022-247X},
   review={\MR{4243712}},
   doi={10.1016/j.jmaa.2021.125220},
}
\end{biblist}
\end{bibdiv}

\end{document}